\newtheorem{lemma}{Lemma}
\newtheorem{proof}{Proof}
\newtheorem{theorem}{Theorem}
\newtheorem{corollary}{Corollary}
\newtheorem{remark}{Remark}
\newtheorem{example}{Example}
\begin{document}

\vskip 3mm

\noindent  \textbf{Strongly mixed random errors in Mann's iteration algorithm for a contractive real function}
\vskip 3mm

\vskip 5mm
\noindent Hassina ARROUDJ$^{1}$, Idir ARAB$^{2}$ and Abdelnasser DAHMANI$^{3}$

\noindent $^{1}$Laboratoire de Math\'{e}matiques Appliqu\'{e}es, Facult\'{e} des Sciences Exactes, Universit\'{e} A. Mira Bejaia, Algerie.\\
\noindent $^{2}$CMUC, Department of Mathematics, University of Coimbra, Portugal\\
\noindent $^{3}$Centre Universitaire de Tamanrasset.

\noindent $^{1}$e-mail: has\_arroudj@hotmail.com\\
 $^{2}$ idir@mat.uc.pt\\
$^{3}$a\_dahmany@yahoo.fr

\vskip 3mm
\noindent \textbf{Keywords:} Fixed point; Fuk-Nagaev's inequalities; Mann's iteration algorithm; $\alpha $-mixing; Rate of
convergence; confidence domain.
\vskip 3mm

\noindent \textbf{Abstract}

This work deals with the Mann's stochastic iteration algorithm under $\alpha -$%
mixing random errors. We establish the Fuk-Nagaev's inequalities that enable us
to prove the almost complete convergence with its corresponding rate of convergence.
Moreover, these inequalities give us the possibility of constructing a confidence
interval for the unique fixed point. Finally, to check the feasibility and validity
of our theoretical results, we consider some numerical examples, namely a classical example from astronomy.
\vskip 4mm

\noindent 1.   \textbf{Introduction}

In many mathematical problems arising from various domains, the existence of a solution is the same as the existence of a fixed point by some appropriate transformation of the problem. The most known problem in that framework is the root existence which can be tackled easily as the existence of a fixed point and vice versa. Therefore, the fixed point theory is of paramount importance in engineering sciences and many areas of mathematics. Fixed point theory provides conditions under which maps have the existence and uniqueness of solutions. Over the last decades, that theory has been revealed as one of the most significant tool in the study of nonlinear problems. In particular, in many fields, equilibria or stability are fundamental concepts that can be described in terms of fixed points. For example, in economics, a Nash equilibrium of a game is a fixed point of the game's best response correspondence. However, in informatics, programming language compilers use fixed point computations for program analysis, for example in data-flow analysis, which is often required for code optimization. The vector of PageRank values of all web pages is the fixed point of a linear transformation derived from the World Wide Web's link structure. In astronomy, the eccentric anomaly $E$ of a given planet is related to a fixed point equation that cannot not be solved analytically, this will be well described in example (\ref{astronomy_example}) and many examples could be found in engineering sciences such as physics, geology, chemistry, biology, mechanical statistics, etc.\\
Mathematically, a fixed point problem is presented under the following form

\begin{equation}\label{Fixed-Point}
\text{Find }x\in X\text{ such that }Fx=x
\end{equation}%
Where $F$ is an operator, defined on a space $X$. The solutions of that equation if they exist are called "fixed points" of the mapping $F$. The classical result in fixed point theory is the Banach fixed-point theorem \cite{banac}; it ensures the existence and uniqueness of a fixed point of certain self-maps of a metric space. Additionally, it provides a constructive numerical method to approximate the fixed point.

After verifying the existence and uniqueness conditions, it is necessary to find (or approximate) the unique fixed point of the problem (\ref{Fixed-Point}). This leads to find the unique root of $F-id_X$ where $id_X$ denotes the identity operator on $X$. Analytically, to find that root, one has to reverse the operator $F-id_X$, and one could immediately think about the difficulty when dealing with inversion and most of the time that task is impossible. Alternatively, numerical methods become the most appropriate and have attracted many researches these last decades. The pioneering work after Picard's iterative method was introduced by Mann \cite{Man} to remedy the problem of convergence while using the Picard's method for approximating the fixed point of nonexpansive mapping. Later, many modified algorithms were introduced, by considering the stochastic part, i.e., considering the errors generated by the  numerical evaluation of the algorithm. For an account of relevant literature on that topic, see (\cite{Ber,Ceg,cha2,Hus,Ish,Kan,Kim,Liu3}).

In the framework of this paper, we consider the Mann iterative algorithm as described
in (\ref{smi}) by taking into account the committed errors at each evaluation of the
approximated fixed point $x_n$. These errors are supposed to be random and modeled
by random variables and we suppose them to be strong mixing. Recall that a sequence $\left( \xi _{i}\right) $
is said to be strong mixing or $\alpha -$mixing if the following condition is satisfied:%
\begin{equation}
\alpha \left( n\right) =\sup_{A\in \mathcal{F}_{-\infty }^{k},B\in \mathcal{F%
}_{k+n}^{+\infty }}\left\vert \mathbb{P}\left( A\cap B\right) -\mathbb{P}%
\left( A\right) \mathbb{P}\left( B\right) \right\vert \underset{%
n\longrightarrow +\infty }{\longrightarrow }0  \label{mf}
\end{equation}%
where $\mathcal{F}_{l}^{m}$ denotes the $\sigma $-algebra engendered by events of
the form $\left\{ \left( \xi _{i_{1}},\cdots ,\xi _{i_{k}}\right) \in
E\right\} $, where $l\leq i_{1}<i_{2}<\cdots <i_{k}\leq m$ and $E$ is a
Borel set.

The notion of $\alpha $-mixing was firstly introduced by Rosenbaltt in
1956 \cite{Ros} and the central limit theorem has been established.
The strong mixing random variables have many interest in linear
processes and found many application in finance, for more examples
and properties concerning the mixing notions, see \cite{Dou,Ibr}.

In this paper, we establish Fuk-Nagaev inequalities. These inequalities allow us to prove the almost complete
convergence of Mann's algorithm to the fixed point, with convergence rate and the possibility of giving the a confidence interval.
To strengthen the obtained theoretical results, some numerical examples are considered.\newline
The rest of the paper is organized as follows: In section 2 the statement of the problem is described and some known results are recalled. In section 3, some new results were
established by using stochastic methods. In section 4, the validity of our
approach is checked up by considering some numerical examples.\\

\noindent 2. \textbf{Preliminaries and known results} \\

Let $\left( \Omega ,\mathcal{F},\mathbb{P}\right) $ be a probability space
and $f:\mathbb{R\rightarrow R}$ \ a non-linear function. We consider the stochastic Mann's iteration algorithm:%
\begin{equation}
x_{n+1}=\left( 1-a_{n}\right) x_{n}+b_{n}f\left( x_{n}\right) +c_{n}\xi _{n},
\label{smi}
\end{equation}%
where the sequences of positive numbers $(a_{n})_{n\geq 1}$ and $(b_{n})_{n\geq 1}$  satisfy the following conditions%
\begin{eqnarray*}
\lim_{n\rightarrow +\infty}b_{n}=\lim_{n\rightarrow +\infty}a_{n}&=&0\text{ and }\sum_{n=1}^{+\infty
}b_{n}=\sum_{n=1}^{+\infty}a_{n}=+\infty \\
\sum_{n=1}^{+\infty }c_{n}^{2} &<&+\infty .
\end{eqnarray*}%

Without loss of generality, we take
\begin{equation*}
a_{n}=b_{n}=\frac{a}{n}\ \text{ and } \ c_{n}=\frac{a}{n^{2}}, a>0.
\end{equation*}

Hence, the stochastic Mann's iteration algorithm (\ref{smi}) takes the following form:
\begin{equation}
x_{n+1}=\left( 1-\frac{a}{n}\right) x_{n}+\frac{a}{n}\left[ f\left(
x_{n}\right) +\frac{1}{n}\xi _{n}\right] .  \label{mannstocha}
\end{equation}
We now introduce some classical hypothesis that will be useful tools for the proof of established results in the sequel:

(H1) : The fixed point $x^{\ast }$ satisfies
\begin{equation}
\exists \ N>0,\ \left\vert x_{1}-x^{\ast }\right\vert \leq N<+\infty .  \label{apriori}
\end{equation}

(H2) : The function $f$ is contractive, i.e, it satisfies the following property:%
\begin{equation}
\forall \ x,y\in \mathbb{R},\left\vert f\left( x\right) -f\left( y\right)
\right\vert \leq c\left\vert x-y\right\vert ,c\in \left( 0,1\right) .
\label{contraction}
\end{equation}

(H3) : The random variables $(\xi _{i})$ fulfill the condition of
uniform decrease, that is%
\begin{equation}
\exists \ p>2,%
\begin{array}{c}
\end{array}%
\ \forall \ t>0,\ \mathbb{P}\left\{ \left\vert \xi _{i}\right\vert >t\right\} \leq
t^{-p}.  \label{queues}
\end{equation}

(H4) : The coefficients of the $\alpha $-mixing sequence $\left( \xi
_{n}\right) _{n}$ satisfy the following arithmetic decay condition:%
\begin{equation}
\exists \ d\geq 1,\ \exists \ \beta >1:\alpha \left( n\right) \leq d \ n^{-\beta
},\forall \ n\in \mathbb{N}^{\ast }.  \label{cm}
\end{equation}

(H5) : The $\alpha $-mixing coefficients satisfy the following condition
\begin{equation}
\exists \ \rho >0,\ \rho \frac{\left( \beta +1\right) p}{\beta +p}>2.
\label{dec}
\end{equation}


\begin{remark}
The assumption (H1) is classical. Arbitrary choice of $x_{1}$ and the
existence of $x^{\ast }$ allows us to assume such supposition. The contraction's assumption
(H2) ensures the existence and uniqueness of the fixed point $x^{*}$ of the function $f$ according to Banach's
theorem for fixed point. When the function $f$ is derivable, the condition (H2) is equivalent to the boundness of the derivative $f^{\prime}$, i.e, $%
\exists \ c >0, \sup\limits_{x}\left\vert f^{\prime }\left( x\right) \right\vert
\leq c<1$. The hypothesis (H3) is satisfied for all bounded random variables and Gaussian ones.
Assumption (H4) is used in order to characterize the dependence structure of errors.
Moreover, combined to (H3), the assumption (H4) allows the obtention of Fuk-Nagaev's inequalities \cite{Rio}, which ensures the almost complete convergence result. As a particular example, the geometric $\alpha $-mixing sequence $\left( \xi _{i}\right) _{i}$ and its mixing coefficients are defined as follows
\begin{equation*}
\exists \ d_{0}>0,\exists \ \kappa \in \left( 0,1\right) :\alpha \left( n\right)
\leq d_{0}\ \kappa ^{n},\forall \ n\ \in \mathbb{N}^{\ast }.
\end{equation*}

The assumption (H5) will be useful for specifying the rate of almost complete convergence of the stochastic Mann's iteration algorithm. That condition is classical, see \cite{Ait,Fer}.
\end{remark}
First, we state the following theorem which will be used in the sequel during the proof of the main result.
\begin{theorem}
\label{Fuk-Nagaev}Let $\left( \xi _{i}\right) _{i\in \mathbb{N}^{\ast }}$ be
a \ centered sequence of real-valued random variables and $\left( \alpha
_{n}\right) _{n\in \mathbb{N}^{\ast }}$ the corresponding sequence of mixing coefficients
as defined in (\ref{mf}) such  that the hypothesis (H2) and (H4)  are satisfied.
Let us set
\begin{equation*}
s_{n}^{2}=\sum\limits_{i=1}^{n}\sum\limits_{k=1}^{n}\left\vert
Cov\left(\xi_{i},\xi_{k}\right) \right\vert .
\end{equation*}%
Then, for every real numbers $r\geq 1$ and $\lambda >0,$ we have%
\begin{equation*}
\mathbb{P}\left\{ \sup_{k\in \left[ 1,n\right] }\left\vert
\sum_{i=1}^{k}\xi _{i} \right\vert \geq
4\lambda \right\} \leq 4\left( 1+\frac{\lambda ^{2}}{rs_{n}^{2}}\right) ^{%
\frac{-r}{2}}+2cnr^{-1}\left( \frac{2r}{\lambda }\right) ^{\frac{\left(
\beta +1\right) p}{\left( \beta +p\right) }}.
\end{equation*}
\end{theorem}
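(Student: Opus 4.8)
The plan is to run the classical truncation-and-blocking scheme behind Fuk-Nagaev type inequalities, in the strongly mixing form of Rio \cite{Rio}. First I would fix a truncation level $M>0$, to be optimised only at the very end, and split each error as $\xi_i=\bar\xi_i+\tilde\xi_i$ with $\bar\xi_i=\xi_i\mathds{1}_{\{|\xi_i|\le M\}}-\mathbb{E}[\xi_i\mathds{1}_{\{|\xi_i|\le M\}}]$ (centered, with $|\bar\xi_i|\le 2M$) and $\tilde\xi_i=\xi_i\mathds{1}_{\{|\xi_i|>M\}}-\mathbb{E}[\xi_i\mathds{1}_{\{|\xi_i|>M\}}]$ (the heavy-tailed remainder). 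On the event $\{\sup_{k\le n}|\sum_{i=1}^k\xi_i|\ge 4\lambda\}$ at least one of the two truncated partial-sum processes exceeds $2\lambda$, so it suffices to bound $\mathbb{P}\{\sup_{k\le n}|\sum_{i=1}^k\bar\xi_i|\ge 2\lambda\}$ and $\mathbb{P}\{\sup_{k\le n}|\sum_{i=1}^k\tilde\xi_i|\ge 2\lambda\}$ separately.

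The tail part is the easy one. I would dominate $\sup_{k\le n}|\sum_{i=1}^k\tilde\xi_i|$ by $\sum_{i=1}^n|\xi_i|\mathds{1}_{\{|\xi_i|>M\}}+\sum_{i=1}^n\mathbb{E}|\xi_i|\mathds{1}_{\{|\xi_i|>M\}}$; the deterministic term is bounded by integrating the uniform tail assumption (H3), giving a quantity of order $nM^{1-p}$, and the random term is killed by the union bound $\mathbb{P}\{\exists\,i\le n:|\xi_i|>M\}\le nM^{-p}$, again by (H3). This yields a contribution of order $nM^{-p}$ and a mild constraint relating $M$, $\lambda$ and $n$ that the final choice of $M$ will satisfy.

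The heart of the proof is the bounded part $\sum\bar\xi_i$. Here I would cut $\{1,\dots,n\}$ into consecutive blocks of a common length $q$ (a second free parameter) and use a coupling argument in the spirit of Bradley's lemma / Rio's construction to replace the alternate blocks by mutually independent copies, at the price of a total error of order $(n/q)\,\alpha(q)$, which by (H4) is at most a constant times $nq^{-(\beta+1)}$. On the resulting (approximately) independent bounded block-sums I would apply a Bernstein-Fuk-Nagaev exponential bound, using that the variances of the block sums add up to at most $s_n^2=\sum_{i,k}|\mathrm{Cov}(\xi_i,\xi_k)|$ and that each summand is bounded by $2M$; estimating the resulting Laplace transform (equivalently, a Markov inequality of polynomial order $r$) and taking the maximal version --- via an Ottaviani-L\'evy type argument, or directly since the exponential bound is already maximal --- produces the term $4\bigl(1+\lambda^2/(rs_n^2)\bigr)^{-r/2}$. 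Finally I would choose $q$ and $M$ so as to balance the heavy-tail contribution $nM^{-p}$, the coupling error $nq^{-(\beta+1)}$, and the size $2M$ of the summands against the effective target $\lambda/r$; this single optimisation is exactly what forces the exponent $\tfrac{(\beta+1)p}{\beta+p}$ and yields the second term $2cn r^{-1}\bigl(2r/\lambda\bigr)^{(\beta+1)p/(\beta+p)}$.

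The step I expect to be the real obstacle is this last one together with the analysis of the bounded blocks: getting the coupling error, the block-variance estimate expressed through $s_n^2$, and the polynomial-order moment optimisation to mesh so that everything collapses to the clean stated form --- in particular tracking the numerical constant $4$ in front of both the exceedance level and the exponential term, and isolating the constant $c$ in the mixing term. Everything in the tail part and in the reduction to bounded variables is routine by comparison.
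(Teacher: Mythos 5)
You should first be aware that the paper does not actually prove this theorem: its ``proof'' is a one-line citation to Rio \cite{Rio}, pages 84--87, where the Fuk--Nagaev inequality for strongly mixing sequences is established. So what you are really proposing is a reconstruction of Rio's argument, and your outline (truncation, blocking, coupling, Bernstein-type bound on the bounded blocks, final optimisation) is the right family of techniques. However, as written it has two genuine gaps, and they sit exactly at the points you yourself flag as ``the real obstacle''.

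First, after truncating at level $M$ you assert that the variances of the block sums ``add up to at most $s_n^2=\sum_{i,k}|\mathrm{Cov}(\xi_i,\xi_k)|$''. That is not automatic: truncation and recentering change the covariances, and there is no trivial domination of $|\mathrm{Cov}(\bar\xi_i,\bar\xi_k)|$ by $|\mathrm{Cov}(\xi_i,\xi_k)|$. In Rio's proof this is precisely where his covariance inequality in terms of the upper-tail quantile functions $Q_i$ and the mixing coefficients is used, and it is why the variance term in the final bound can be taken to be the $s_n^2$ of the original variables. Second, the optimisation you describe --- balancing the heavy-tail contribution $nM^{-p}$ against the coupling error $(n/q)\alpha(q)\lesssim nq^{-(\beta+1)}$ under the constraint $rMq\lesssim\lambda$ --- does not produce the stated exponent: eliminating $M$ and $q$ this way gives a term of order $n\left(\frac{2r}{\lambda}\right)^{\frac{(\beta+1)p}{\beta+p+1}}$, i.e.\ exponent $\frac{(\beta+1)p}{\beta+p+1}$, which (since in the relevant regime $2r/\lambda<1$) is strictly weaker than the claimed $\frac{(\beta+1)p}{\beta+p}$. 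To reach the stated exponent the truncation level must be tied to the quantile function evaluated at mixing-coefficient levels (in Rio's formulation the second term appears as $\frac{4n}{\lambda}\int_0^{\alpha(q)}Q(u)\,du$, which is then computed under (H3)--(H4)), not chosen as a free parameter balanced crudely against $q$. So the skeleton is right, but the two quantitative steps you defer are the proof, and the crude versions of them that you propose would only yield a weaker inequality. (A side remark: the constant $c$ and hypothesis (H2) appearing in the paper's statement are an artefact of sloppy transcription --- the contraction constant of $f$ has nothing to do with the $\xi_i$; in Rio the corresponding constant comes from the tail and mixing constants, so do not try to make your proof produce that $c$.)
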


\begin{proof} The proof is well detailed in \cite{Rio}, page 84 to 87.
\end{proof}

\begin{lemma}
Using the hypothesis (H1), we get the following inequality:
\begin{equation}
\left\vert x_{n+1}-x^{\ast }\right\vert \leq N\prod\limits_{i=1}^{n}\left(
1-\frac{a\left( 1-c\right) }{i}\right) +\sum\limits_{i=1}^{n}\frac{a}{i^{2}}%
\prod\limits_{j=i+1}^{n}\left( 1-\frac{a\left( 1-c\right) }{j}\right)
\left\vert \xi _{i}\right\vert .  \label{formule}
\end{equation}
\end{lemma}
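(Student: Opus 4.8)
The plan is to reduce the stochastic recursion (\ref{mannstocha}) to a scalar affine inequality for the error $u_n := \left\vert x_n - x^*\right\vert$ and then to unroll it by induction. First I would exploit that $x^{\ast}$ is a fixed point of $f$, i.e. $f(x^{\ast}) = x^{\ast}$: adding and subtracting $\left(1-\tfrac{a}{n}\right)x^{\ast}$ in (\ref{mannstocha}) gives
\begin{equation*}
x_{n+1} - x^{\ast} = \left(1 - \frac{a}{n}\right)(x_n - x^{\ast}) + \frac{a}{n}\bigl(f(x_n) - f(x^{\ast})\bigr) + \frac{a}{n^2}\xi_n .
\end{equation*}
Taking absolute values, using the triangle inequality together with the contraction hypothesis (H2) in the form $\left\vert f(x_n)-f(x^{\ast})\right\vert \le c\left\vert x_n-x^{\ast}\right\vert$, I obtain the one-step estimate
\begin{equation*}
u_{n+1} \le \left(1 - \frac{a(1-c)}{n}\right) u_n + \frac{a}{n^2}\left\vert \xi_n\right\vert ,
\end{equation*}
valid as soon as the coefficient $1-\tfrac{a}{n}$ is nonnegative.

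Next I would establish (\ref{formule}) by induction on $n$, with the convention $\prod_{j=n+1}^{n}(\cdots)=1$. The base case $n=1$ is precisely the one-step estimate combined with hypothesis (H1), which provides $u_1 = \left\vert x_1-x^{\ast}\right\vert \le N$. For the inductive step, I substitute the bound already known for $u_n$ into the one-step estimate for $u_{n+1}$: the factor $1-\tfrac{a(1-c)}{n}$ multiplies into the leading product, turning $\prod_{i=1}^{n-1}$ into $\prod_{i=1}^{n}$ and, inside the sum, each $\prod_{j=i+1}^{n-1}$ into $\prod_{j=i+1}^{n}$, while the fresh error term $\tfrac{a}{n^2}\left\vert\xi_n\right\vert$ enters with empty product equal to $1$. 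This reproduces exactly the right-hand side of (\ref{formule}).

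The only point that needs care is the sign of the multiplicative factors: multiplying an inequality through by $1-\tfrac{a(1-c)}{i}$ (and, upstream, by $1-\tfrac{a}{i}$) preserves its direction only when these quantities are nonnegative. Since $a_n\to 0$ and $c_n\to 0$, this holds for every $i\ge i_0$ for some $i_0$, and the finitely many initial indices merely rescale the constant in front of the product; with the normalization tacitly made after (\ref{mannstocha}) one may in fact take $i_0=1$ directly, so that the constant stays equal to $N$. Beyond this sign check, the argument is only the routine telescoping bookkeeping of the two products.
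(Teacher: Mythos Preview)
Your argument is correct and is exactly the approach the paper has in mind: the paper's own proof consists of the single sentence ``straightforward by induction on $n$'', and what you wrote is precisely that induction, spelled out via the one-step contraction estimate $u_{n+1}\le\bigl(1-\tfrac{a(1-c)}{n}\bigr)u_n+\tfrac{a}{n^2}\lvert\xi_n\rvert$ and then unrolled. Your sign remark is also in line with the paper, which works under $a<1$ (cf.\ the hypothesis of the next lemma), so that $1-\tfrac{a}{n}\ge 0$ for every $n\ge 1$ and no shift of the starting index is needed.
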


\begin{proof}
The proof is straightforward by induction on $n$.%
\end{proof}

\begin{lemma}
For all constants $a<1$, we have the following inequalities%
\begin{equation}
\prod\limits_{j=i+1}^{n}\left( 1-\frac{a\left( 1-c\right) }{j}\right) \leq
\left( \frac{i+1}{n+1}\right) ^{a\left( 1-c\right) },  \label{produit}
\end{equation}%
and,
\begin{equation}
\sum_{i=1}^{n}\frac{a}{i^{2}}\prod\limits_{j=i+1}^{n}\left( 1-\frac{a\left(
1-c\right) }{j}\right) \leq \frac{aS}{\left( n+1\right) ^{a\left( 1-c\right)
}}.  \label{somprod}
\end{equation}
\end{lemma}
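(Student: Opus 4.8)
The plan is to prove the two inequalities in sequence, using the first to derive the second. For (\ref{produit}), the standard device is to pass to logarithms and exploit the elementary bound $\log(1-x) \le -x$, valid for $x \in [0,1)$. Applying this to each factor gives
\[
\log \prod_{j=i+1}^{n}\left(1-\frac{a(1-c)}{j}\right) = \sum_{j=i+1}^{n}\log\left(1-\frac{a(1-c)}{j}\right) \le -a(1-c)\sum_{j=i+1}^{n}\frac{1}{j}.
\]
Then I would compare the harmonic sum to an integral: $\sum_{j=i+1}^{n}\frac{1}{j} \ge \int_{i+1}^{n+1}\frac{dt}{t} = \log\frac{n+1}{i+1}$. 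Substituting back and exponentiating yields
\[
\prod_{j=i+1}^{n}\left(1-\frac{a(1-c)}{j}\right) \le \exp\left(-a(1-c)\log\frac{n+1}{i+1}\right) = \left(\frac{i+1}{n+1}\right)^{a(1-c)},
\]
which is exactly (\ref{produit}). The hypothesis $a<1$ (together with $c\in(0,1)$ from (H2)) guarantees $a(1-c)/j \in [0,1)$ for all $j\ge 1$, so the logarithm bound is legitimate for every factor.

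For (\ref{somprod}), I would first factor out $(n+1)^{-a(1-c)}$ using (\ref{produit}):
\[
\sum_{i=1}^{n}\frac{a}{i^{2}}\prod_{j=i+1}^{n}\left(1-\frac{a(1-c)}{j}\right) \le \frac{a}{(n+1)^{a(1-c)}}\sum_{i=1}^{n}\frac{(i+1)^{a(1-c)}}{i^{2}}.
\]
Since $a(1-c) < 1$, we have $(i+1)^{a(1-c)} \le (i+1) \le 2i$, so $\frac{(i+1)^{a(1-c)}}{i^2} \le \frac{2}{i}$ — but that diverges, so a sharper bound is needed: use $(i+1)^{a(1-c)} \le (2i)^{a(1-c)} = 2^{a(1-c)} i^{a(1-c)}$, giving a summand of order $i^{a(1-c)-2}$, and since $a(1-c)-2 < -1$ the series $\sum_{i\ge 1} i^{a(1-c)-2}$ converges. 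Denoting its sum (times the constant $2^{a(1-c)}$) by $S$, we obtain (\ref{somprod}) with this explicit finite constant $S$.

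The only real subtlety is keeping the exponents straight and making sure the residual series genuinely converges; the convergence hinges on $a(1-c) < 1$, i.e. on the standing assumption $a<1$ combined with $c<1$, which forces the exponent $a(1-c)-2$ to be strictly below $-1$. Everything else is routine: the $\log(1-x)\le -x$ estimate, one integral comparison for the harmonic sum, and one crude power-of-two bound on $i+1$ versus $i$. I do not anticipate any genuine obstacle here — this lemma is purely a technical preparation whose constants $S$ feed into the main almost-complete-convergence argument.
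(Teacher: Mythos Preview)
Your proof is correct and follows essentially the same route as the paper: the paper also uses $1-x\le e^{-x}$ followed by the integral comparison $\sum_{j=i+1}^{n}\frac{1}{j}\ge \log\frac{n+1}{i+1}$ for (\ref{produit}), and then plugs (\ref{produit}) into the sum and defines $S$ as the sum of the convergent series $\sum_{i\ge 1}\frac{(i+1)^{a(1-c)}}{i^2}$ for (\ref{somprod}). Your version is in fact slightly more explicit, since you spell out why that residual series converges (via the comparison with $\sum i^{a(1-c)-2}$), whereas the paper simply asserts convergence.
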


\begin{proof}
We have,
\begin{equation*}
\prod\limits_{j=i+1}^{n}\left( 1-\frac{a\left( 1-c\right) }{j}\right) \leq
\exp \left( -a\left( 1-c\right) \sum_{j=i+1}^{n}\frac{1}{j}\right) \leq
\left( \frac{i+1}{n+1}\right) ^{a\left( 1-c\right) }.
\end{equation*}%
The second inequality follows immediately  from inequality (\ref{produit}) by setting $S$ the sum of the
convergent series $\frac{\left( i+1\right) }{i^{2}}^{a\left(1-c\right) }$.
\end{proof}

\vskip 3mm

\noindent 3.  \textbf{Convergence of Mann iterative algorithm}

\begin{theorem}
Under the assumptions (H1)--(H5), we have for any real positive $\rho $ such that $\frac{2(\beta+p)}{p(\beta+1)}<\rho <a\left( 1-c\right) <1$, we have:
\begin{equation}
x_{n+1}-x^{\ast }=\mathcal{O}\left(\frac{\sqrt{\ln n}}{n^{a\left( c-1\right) -\rho }}\right)\ \ \
\ a.co.\label{Rate}
\end{equation}
\end{theorem}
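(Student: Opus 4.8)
The plan is to start from the deterministic decomposition in Lemma~\ref{formule}, which splits $|x_{n+1}-x^*|$ into a purely deterministic term and a stochastic term involving the partial behaviour of the errors $\xi_i$. The first term, $N\prod_{i=1}^n(1-a(1-c)/i)$, is handled by the bound in the second lemma: it is $\mathcal{O}\big((n+1)^{-a(1-c)}\big)$, which is $o\big(n^{\rho-a(1-c)}\sqrt{\ln n}\big)$ for any $\rho>0$, so it is negligible with respect to the claimed rate. Thus everything reduces to controlling $T_n:=\sum_{i=1}^n \frac{a}{i^2}\prod_{j=i+1}^n(1-a(1-c)/j)\,|\xi_i|$.

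To bound $T_n$ I would first reduce it to a partial-sum functional of the $\xi_i$ so that Theorem~\ref{Fuk-Nagaev} applies. Writing $c_{n,i}:=\frac{a}{i^2}\prod_{j=i+1}^n(1-a(1-c)/j)$, an Abel summation (summation by parts) expresses $\sum_{i=1}^n c_{n,i}\xi_i$ in terms of the sums $S_k=\sum_{i=1}^k\xi_i$ and the increments of the coefficients $c_{n,i}$, reducing the problem to $\sup_{k\le n}|S_k|$ up to the factor $\sum_i |c_{n,i}| \lesssim a S/(n+1)^{a(1-c)}$ from inequality~\eqref{somprod}. (One must also center the $\xi_i$, or invoke (H3) to see that the means are uniformly bounded and contribute only a term of the same negligible order as the deterministic part; I would note this explicitly.) Then, for the fixed point value, I would apply the Fuk-Nagaev inequality of Theorem~\ref{Fuk-Nagaev} with the threshold chosen as $4\lambda_n$ where $\lambda_n \asymp n^{\rho}\sqrt{\ln n}$ (matching, after multiplication by the $(n+1)^{-a(1-c)}$ factor, the target rate $n^{a(c-1)-\rho}\sqrt{\ln n} = \sqrt{\ln n}/n^{a(1-c)-\rho}$ — note $a(c-1)=-a(1-c)$), and with $r=r_n$ chosen to grow logarithmically, $r_n \asymp \ln n$.

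The point of these choices is to make both tail terms summable in $n$. For the first term $4\big(1+\lambda_n^2/(r_n s_n^2)\big)^{-r_n/2}$: under (H3)–(H4) the Rio covariance bounds give $s_n^2 = \sum_{i,k}|\mathrm{Cov}(\xi_i,\xi_k)| = \mathcal{O}(n)$ (because $\beta>1$ makes $\sum_k \alpha(k)^{1-2/p}$ converge), so $\lambda_n^2/(r_n s_n^2) \asymp n^{2\rho}/n = n^{2\rho}$ grows, hence $(1+n^{2\rho})^{-r_n/2}$ decays faster than any polynomial once $r_n \asymp \ln n$ — giving a summable series. For the second term $2cnr_n^{-1}(2r_n/\lambda_n)^{(\beta+1)p/(\beta+p)}$: the exponent $(\beta+1)p/(\beta+p)>2/\rho$ by (H5)/the theorem's hypothesis $\rho > 2(\beta+p)/(p(\beta+1))$, so $(1/\lambda_n)^{(\beta+1)p/(\beta+p)} \lesssim n^{-\rho\cdot(\beta+1)p/(\beta+p)} = n^{-(2+\epsilon)}$ for some $\epsilon>0$, and the prefactor $n$ and the $\ln$-powers from $r_n$ and $\lambda_n$ are absorbed; this too is summable. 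Summing the two bounds over $n$ and invoking the definition of almost complete convergence (a sequence $Y_n$ is $\mathcal{O}(u_n)$ a.co. if $\sum_n \mathbb{P}\{|Y_n|>\varepsilon u_n\}<\infty$ for some $\varepsilon>0$) yields $T_n = \mathcal{O}\big(\sqrt{\ln n}/n^{a(1-c)-\rho}\big)$ a.co., and combining with the negligible deterministic term finishes the proof.

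The main obstacle I anticipate is the bookkeeping in matching the three free parameters $\lambda_n$, $r_n$ and the target exponent so that \emph{both} Fuk-Nagaev terms are simultaneously summable: the first term wants $r_n$ large (and benefits from $\lambda_n$ large), while the second term's $r_n^{(\beta+1)p/(\beta+p)-1}$ growth pushes the other way, so one needs $r_n$ growing just fast enough (logarithmically) — and one must verify $s_n^2=\mathcal{O}(n)$ carefully from (H3)–(H4), since the whole calibration hinges on that variance estimate and on the strict inequality in (H5).
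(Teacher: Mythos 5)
Your overall architecture parallels the paper's: decompose via (\ref{formule}), treat the deterministic product and the means $\mathbb{E}\left\vert \xi _{i}\right\vert $ as a negligible term, apply the Fuk--Nagaev inequality of Theorem \ref{Fuk-Nagaev} to the centered fluctuation with an effective threshold of order $n^{\rho }\sqrt{\ln n}$ after normalization by $n^{a(1-c)}$, and use (H5) to make the algebraic tail term summable (and, like the paper's own proof, you read the target rate as $\sqrt{\ln n}/n^{a(1-c)-\rho }$, which is clearly what is intended). The genuine gap is in how you reduce to Theorem \ref{Fuk-Nagaev}. You pass by Abel summation to the \emph{unweighted} partial sums of $\left\vert \xi _{i}\right\vert -\mathbb{E}\left\vert \xi _{i}\right\vert $, for which the best available bound is $s_{n}^{2}=\mathcal{O}(n)$, and you then take $r_{n}\asymp \ln n$, claiming $\lambda _{n}^{2}/(r_{n}s_{n}^{2})\asymp n^{2\rho }$. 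The arithmetic is off: with $\lambda _{n}\asymp n^{\rho }\sqrt{\ln n}$ and $s_{n}^{2}\asymp n$ the ratio is of order $n^{2\rho -1}$, and the theorem allows $\rho $ anywhere in $\bigl(\tfrac{2(\beta +p)}{p(\beta +1)},a(1-c)\bigr)$, an interval that can lie entirely below $1/2$ (for instance $p=10$, $\beta =9$, $a(1-c)=0.45$). In that admissible regime $n^{2\rho -1}\rightarrow 0$, and since $r\ln (1+x)\leq rx$, the exponent in the first Fuk--Nagaev term is at most $\lambda _{n}^{2}/(2s_{n}^{2})\asymp n^{2\rho -1}\ln n\rightarrow 0$ whatever $r_{n}$ is; that term then does not even tend to $0$, let alone sum over $n$. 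So no calibration of $r_{n}$ repairs your scheme: the loss is in the variance proxy, not in $r$.

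The fix is exactly what the paper does: keep the weights inside the variables before invoking Fuk--Nagaev, i.e. work with $Z_{i}=\frac{an^{a(1-c)}}{i^{2}}\prod_{j=i+1}^{n}\bigl(1-\frac{a(1-c)}{j}\bigr)\bigl(\left\vert \xi _{i}\right\vert -\mathbb{E}\left\vert \xi _{i}\right\vert \bigr)$. By (\ref{produit}) the $i$-th weight is $\mathcal{O}\bigl(i^{a(1-c)-2}\bigr)$, so under (H3), (H4) and Ibragimov's covariance inequality the variance and covariance sums converge and $s_{n}^{2}\leq S<\infty $ uniformly in $n$, not $\mathcal{O}(n)$. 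Then $\lambda _{n}^{2}/(rs_{n}^{2})\asymp n^{2\rho }\ln n$ grows polynomially, a \emph{fixed} $r>2/\rho $ already makes the first term $\mathcal{O}\bigl(n^{-\rho r/2}\bigr)$ summable, and the second term is $\mathcal{O}\bigl(n^{1-\rho (\beta +1)p/(\beta +p)}\bigr)$ up to logarithms, summable by (H5); no $n$-dependent $r$ and no Abel summation are needed. (Also, had you kept the Abel step, the correct factor is the total variation of the coefficients $c_{n,i}$, not $\sum_{i}\left\vert c_{n,i}\right\vert $, although both are $\mathcal{O}\bigl(n^{-a(1-c)}\bigr)$ here.) Your treatment of the deterministic part, of the means via (H3), and your use of (H5) for the polynomial term are otherwise consistent with the paper's argument.
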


\begin{proof}
Using the inequality (\ref{formule}), we have
\begin{eqnarray}
&&\mathbb{P}\left\{ \left\vert x_{n+1}-x^{\ast }\right\vert >\varepsilon
\right\}  \notag \\
&\leq &\mathbb{P}\left\{ N\prod\limits_{i=1}^{n}\left( 1-\frac{a\left(
1-c\right) }{i}\right) +\sum\limits_{i=1}^{n}\frac{a}{i^{2}}%
\prod\limits_{j=i+1}^{n}\left( 1-\frac{a\left( 1-c\right) }{j}\right)
\left\vert \xi _{i}\right\vert >\varepsilon \right\}  \notag \\
&\leq &\mathbb{P}\left\{ N\prod\limits_{i=1}^{n}\left( 1-\frac{a\left(
1-c\right) }{i}\right) +\sum\limits_{i=1}^{n}\frac{a}{i^{2}}%
\prod\limits_{j=i+1}^{n}\left( 1-\frac{a\left( 1-c\right) }{j}\right)
\mathbb{E}\left\vert \xi _{i}\right\vert \geq \frac{\varepsilon }{2}\right\}
\notag \\
&&+\mathbb{P}\left\{ \sum\limits_{i=1}^{n}\frac{a}{i^{2}}%
\prod\limits_{j=i+1}^{n}\left( 1-\frac{a\left( 1-c\right) }{j}\right)
\left( \left\vert \xi _{i}\right\vert -\mathbb{E}\left\vert \xi
_{i}\right\vert \right) >\frac{\varepsilon }{2}\right\} .  \label{sommation}
\end{eqnarray}

Firstly, we have%
\begin{equation}
\mathbb{P}\left\{ N\prod\limits_{i=1}^{n}\left( 1-\frac{a\left( 1-c\right)
}{i}\right) +\sum\limits_{i=1}^{n}\frac{a}{i^{2}}\prod\limits_{j=i+1}^{n}%
\left( 1-\frac{a\left( 1-c\right) }{j}\right) \mathbb{E}\left\vert \xi
_{i}\right\vert >\frac{\varepsilon }{2}\right\} \leq K_{1}e^{-n^{2a\left(
1-c\right) }\varepsilon ^{2}}.  \label{terme1}
\end{equation}

We set
\begin{equation*}
Z_{i}=\frac{an^{a\left( 1-c\right) }}{i^{2}}\prod\limits_{j=i+1}^{n}\left(
1-\frac{a\left( 1-c\right) }{j}\right) \left( \left\vert \xi _{i}\right\vert
-\mathbb{E}\left\vert \xi _{i}\right\vert \right) .
\end{equation*}%
Note that the random variables $(Z_{i})$ are centered and according to (\ref%
{queues}), we show that, there exists a positive constant $M$ such that,%
\begin{equation}
\forall \ t>0%
\begin{array}{c}
,%
\end{array}%
\mathbb{P}\left\{ \left\vert Z_{i}\right\vert >t\right\} \leq Mt^{-p}.
\label{queuesZ}
\end{equation}%
Finally, we notice that if the random errors $(\xi _{i})$ are $\alpha $-mixing,
then the random variables $(Z_{i})$ remain also with mixing
coefficients less than or equal to those of the sequence $\left( \xi
_{i}\right) _{i}$. Thus, applying the Fuk-Nagaev's exponential inequality given
by Rio (Theorem \ref{Fuk-Nagaev}) to the variables $(Z_{i})$, we obtain for any $\varepsilon >0$ and $r\geq 1:$%
\begin{eqnarray}
\mathbb{P}\left\{ \left\vert x_{n+1}-x^{\ast }\right\vert >\varepsilon
\right\} &\leq &K_{1}e^{-n^{2a\left( 1-c\right) }\varepsilon ^{2}}+4\left( 1+%
\frac{\varepsilon ^{2}n^{2a\left( 1-c\right) }}{4rs_{n}^{2}}\right) ^{\frac{%
-r}{2}}  \notag \\
&&+4Cnr^{-1}\left( \frac{2r}{\varepsilon n^{a\left( 1-c\right) }}\right) ^{%
\frac{\left( \beta +1\right) p}{\left( \beta +p\right) }}  \label{Nagaev}
\end{eqnarray}%
where,
\begin{equation*}
C=2Mp\left( 2p-1\right) ^{-1}\left( 2^{\beta }d\right) ^{\frac{p-1}{\beta +p}%
}\text{ and }s_{n}^{2}=\sum\limits_{i=1}^{n}\sum\limits_{k=1}^{n}\left\vert
Cov\left( Z_{i},Z_{k}\right) \right\vert .
\end{equation*}

Let us bound the double sum of covariances $s_{n}^{2}$, we have:%
\begin{equation*}
s_{n}^{2}=\sum\limits_{i=1}^{n}\sum\limits_{k=1}^{n}\left\vert Cov%
\left( Z_{i},Z_{k}\right) \right\vert =\sum\limits_{i=1}^{n} Var%
\left( Z_{i}\right) +\sum\limits_{i=1}^{n}\sum\limits_{k\neq i}^{n}\left\vert
Cov\left( Z_{i},Z_{k}\right) \right\vert .
\end{equation*}

We recall that
\begin{equation}
\sum\limits_{i=1}^{n} Var\left( Z_{i}\right) \leq
\sum\limits_{i=1}^{n}\frac{a^{2}\left( i+1\right) ^{2a\left( 1-c\right) }}{%
i^{4}} Var\left( \left\vert \xi _{i}\right\vert \right) \leq S_{v}
\label{var}
\end{equation}%
since it is a partial sum of a convergent series with positive terms.

On the other hand, for $i\neq k$, we have
\begin{equation}
\left\vert Cov\left( Z_{i},Z_{k}\right) \right\vert \leq \frac{%
a^{2}\left( i+1\right) ^{a\left( 1-c\right) }}{i^{2}}\frac{\left( k+1\right)
^{a\left( 1-c\right) }}{k^{2}}\left \vert \mathbb{E}\left( \left\vert \xi
_{i}\right\vert -\mathbb{E}\left\vert \xi _{i}\right\vert \right) \left(
\left\vert \xi _{k}\right\vert -\mathbb{E}\left\vert \xi _{k}\right\vert
\right) \right \vert .
\end{equation}%
According to the inequality given by Ibragimov \cite{Ibr} (Theorem 17.2.2
page 307), we obtain:%
\begin{equation}
\left\vert \mathbb{E}\left( \left\vert \xi _{i}\right\vert -\mathbb{E}%
\left\vert \xi _{i}\right\vert \right) \left( \left\vert \xi _{k}\right\vert
-\mathbb{E}\left\vert \xi _{k}\right\vert \right) \right\vert \leq \left(
4+6C\right) \left( \alpha \left( \left\vert i-k\right\vert \right) \right) ^{%
\frac{p-2}{p}},
\end{equation}%
consequently,
\begin{equation}
\left\vert Cov\left( Z_{i},Z_{k}\right) \right\vert \leq
a^{2}\left( 4+6C\right) \frac{\left( i+1\right) ^{a\left( 1-c\right) }}{i^{2}%
}\frac{\left( k+1\right) ^{a\left( 1-c\right) }}{k^{2}}\left( \alpha \left(
\left\vert i-k\right\vert \right) \right) ^{\frac{p-2}{p}}.  \label{ras1}
\end{equation}%
Since the mixing coefficients of the sequence $\left( \left\vert \xi
_{i}\right\vert -\mathbb{E}\left\vert \xi _{i}\right\vert \right) _{i}$ are
less than or equal to those of the sequence $\left( \xi _{i}\right) _{i}$, we get

\begin{eqnarray}
&&\ \sum\limits_{i=1}^{n}\sum\limits_{k\neq i}^{n}\left\vert Cov\left(
Z_{i},Z_{k}\right) )\right\vert \leq
\sum\limits_{i=1}^{n}\sum\limits_{\left\vert i-k\right\vert \leq
u_{n}}a^{2}\left( 4+6C\right) \frac{\left( i+1\right) ^{a\left( 1-c\right) }%
}{i^{2}}\frac{\left( k+1\right) ^{a\left( 1-c\right) }}{k^{2}}\left( \alpha
\left( \left\vert i-k\right\vert \right) \right) ^{\frac{p-2}{p}}  \notag \\
&&+\sum\limits_{i=1}^{n}\sum\limits_{\left\vert i-k\right\vert
>u_{n}}a^{2}\left( 4+6C\right) \frac{\left( i+1\right) ^{a\left( 1-c\right) }%
}{i^{2}}\frac{\left( k+1\right) ^{a\left( 1-c\right) }}{k^{2}}\left( \alpha
\left( \left\vert i-k\right\vert \right) \right) ^{\frac{p-2}{p}}\leq S_{c}.
\label{cov}
\end{eqnarray}%
Combining (\ref{var}) and (\ref{cov}), we obtain:
\begin{equation}
s_{n}^{2}\leq S_{v}+S_{c}=S.  \label{sn2}
\end{equation}%
So, from (\ref{sn2}), we have the inequality
\begin{equation}
\mathbb{P}\left\{ \left\vert x_{n+1}-x^{\ast }\right\vert >\varepsilon
\right\} \leq T_{1}+T_{2}+T_{3}
\end{equation}%
where
\begin{equation*}
T_{1}=K_{1}e^{-n^{2a\left( 1-c\right) }\varepsilon ^{2}},\ T_{2}=4\left( 1+%
\frac{n^{2a\left( 1-c\right) }\varepsilon ^{2}}{4rS}\right) ^{\frac{-r}{2}}%
\text{ and }T_{3}=4Cnr^{-1}\left( \frac{r}{n^{a\left( 1-c\right)
}\varepsilon }\right) ^{\frac{\left( \beta +1\right) p}{\beta +p}}.
\end{equation*}

For a well chosen positive number $r$ and $\varepsilon$, the quantities $T_{1},T_{2}$ and $T_{3}$ become a general terms
of convergent series. Consequently, we obtain
\begin{equation*}
\sum_{n=1}^{+\infty }\mathbb{P}\left\{ \left\vert x_{n+1}-x^{\ast }\right\vert
>\varepsilon \right\} <+\infty
\end{equation*}%
that ensures the almost complete convergence of $\left( x_{n}\right) _{n}$
to the unique fixed point $x^{\ast }.$ The choice of the tuning positive number $r$ will be specified while deriving the corresponding rate of convergence.

Recall that $x_{n}-x^{\ast }=\mathcal{O}\left( \epsilon_{n}\right) $ almost completely $%
(a.co),$ where $\left( \epsilon_{n}\right) _{n}$ is a sequence of real positive
numbers tending to zero, if there exists a positive constant $k$ such that%
\begin{equation*}
\sum_{n=1}^{+\infty }\mathbb{P}\left\{ \left\vert x_{n}-x^{\ast }\right\vert
>k\epsilon_{n}\right\} <+\infty .
\end{equation*}%
Basing on the inequalities obtained above, we take:%
\begin{equation*}
\varepsilon=\varepsilon_{n}=k\epsilon _{n}, \text{ where } k=\sqrt{1+\delta },\text{ }\delta >0\text{ and }\epsilon_{n}=\frac{%
\sqrt{\ln n}}{n^{a\left( 1-c\right) -\rho }}.
\end{equation*}%
Hence, we obtain
\begin{equation}
T_{1}=K_{1}e^{-\left( n+1\right) ^{2a\left( 1-c\right) }\varepsilon
^{2}}\leq K_{1}e^{-\left( 1+\delta \right) \ln n}=\frac{K_{1}}{n^{1+\delta }}%
.  \label{T1}
\end{equation}%
For a suitably chosen $r$ such that $r>\frac{2}{\rho }$, we obtain%
\begin{equation}
T_{2}=4\left( 1+\frac{\left( 1+\delta \right) n^{\rho }}{rS}\right) ^{\frac{%
-r}{2}}\leq K_{2}n^{-\rho \frac{r}{2}}  \label{T2}
\end{equation}%
where
\begin{equation*}
K_{2}=\left( \frac{rS}{1+\sigma }\right) ^{r/2}.
\end{equation*}%
With regard to $T_{3}$, we have%
\begin{eqnarray}
T_{3} &\leq &4Cnr^{-1}\left( \frac{r}{\sqrt{1+\delta }n^{\rho }\ln n}\right)
^{\frac{\left( \beta +1\right) p}{\beta +p}}  \notag \\
&=&4Cr^{\frac{\left( \beta +1\right) p}{\beta +p}-1}\frac{n}{\left( \sqrt{%
1+\delta }n^{\rho }\ln n\right) ^{\frac{\left( \beta +1\right) p}{\beta +p}}}%
  \notag
\end{eqnarray}%
With $r$ chosen as in (\ref{T2}), we deduce
\begin{equation}
T_{3}\leq K_{3}\frac{1}{n^{\rho \frac{\left( \beta +1\right) p}{\beta +p}%
-1}\left( \ln n\right) ^{\frac{\left( \beta +1\right) p}{\beta +p}}}
\label{T3}
\end{equation}%
which is a general term of Bertrand series, it is convergent because of the
hypothesis (H5).
It leads that:
\begin{equation}
\mathbb{P}\left\{ \left\vert x_{n+1}-x^{\ast }\right\vert >\sqrt{1+\delta }%
\frac{\sqrt{\ln n}}{n^{a\left( 1-c\right) -\rho }}\right\} \leq \frac{K_{1}}{%
n^{1+\delta }}+\frac{K_{2}}{n^{\rho \frac{r}{2}}}+\frac{K_{3}}{n^{\rho \frac{%
\left( \beta +1\right) p}{\beta +p}}\left( \ln n\right) ^{\frac{\left( \beta
+1\right) p}{\beta +p}}}.  \label{dering}
\end{equation}%
The right-hand side of the last inequality is a term of a convergent series.
\end{proof}
\begin{remark}
Remark that in the obtained rate of convergence given by the formula
(\ref{Rate}), more the quantity $\frac{2(\beta+p)}{p(\beta+1)}$ is
small, more we have the choice of taking $\rho$ small and consequently
the rate of convergence becomes more interesting.
\end{remark}
\begin{corollary}
Under the assumptions (H1)--(H5), for a given level $\sigma $, there exists a
natural integer $n_{\sigma }$ for which the fixed point $x^{\ast }$ of the function $f$
belongs to closed interval of center $x_{n_{\sigma}}$ and radius $%
\varepsilon $ with a probability greater than or equal to $1-\sigma $.
\begin{equation}
\forall\ \varepsilon >0,\forall \ \sigma >0,\exists \ n_{\sigma }\in \mathbb{N}:%
\mathbb{P}\left\{ \left\vert x_{n_{\sigma }}-x^{\ast }\right\vert \leq
\varepsilon \right\} \geq 1-\sigma .  \label{ic}
\end{equation}
\end{corollary}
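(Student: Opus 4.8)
The plan is to read the conclusion directly off inequality (\ref{dering}), which was established in the proof of the previous theorem, by passing to complementary events and then choosing $n_\sigma$ large enough. First I would recall that, writing $\varepsilon_n = \sqrt{1+\delta}\,\frac{\sqrt{\ln n}}{n^{a(1-c)-\rho}}$ and
\[
R_n = \frac{K_1}{n^{1+\delta}} + \frac{K_2}{n^{\rho r/2}} + \frac{K_3}{n^{\rho(\beta+1)p/(\beta+p)}(\ln n)^{(\beta+1)p/(\beta+p)}},
\]
inequality (\ref{dering}) reads $\mathbb{P}\{|x_{n+1}-x^\ast|>\varepsilon_n\}\le R_n$, hence $\mathbb{P}\{|x_{n+1}-x^\ast|\le \varepsilon_n\}\ge 1-R_n$. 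Under (H5) together with the constraints $\rho>\frac{2(\beta+p)}{p(\beta+1)}$ and $r>2/\rho$ used in the theorem, each of the three summands of $R_n$ is the general term of a convergent series, so in particular $R_n\to 0$; likewise $\varepsilon_n\to 0$ because $a(1-c)-\rho>0$.

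Next, I fix $\varepsilon>0$ and $\sigma>0$. Since $\varepsilon_n\to 0$, there is $n_1\in\mathbb{N}$ with $\varepsilon_n\le\varepsilon$ for every $n\ge n_1$; since $R_n\to 0$, there is $n_2\in\mathbb{N}$ with $R_n\le\sigma$ for every $n\ge n_2$. I then set $n_\sigma=\max(n_1,n_2)+1$, so that $n_\sigma-1\ge\max(n_1,n_2)$, and apply the bound above with $n=n_\sigma-1$:
\[
\mathbb{P}\left\{\left\vert x_{n_\sigma}-x^\ast\right\vert\le\varepsilon\right\}
\ge \mathbb{P}\left\{\left\vert x_{n_\sigma}-x^\ast\right\vert\le\varepsilon_{n_\sigma-1}\right\}
\ge 1-R_{n_\sigma-1}\ge 1-\sigma,
\]
where the first inequality uses $\varepsilon_{n_\sigma-1}\le\varepsilon$ and the last uses $R_{n_\sigma-1}\le\sigma$. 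This is precisely (\ref{ic}), and the closed ball of centre $x_{n_\sigma}$ and radius $\varepsilon$ is the announced confidence domain at level $1-\sigma$.

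The argument is essentially bookkeeping once (\ref{dering}) is available, so there is no real obstacle; the only point requiring a little care is that the radius $\varepsilon$ is prescribed in advance and has nothing to do with the intrinsic radius $\varepsilon_n$ produced by the Fuk--Nagaev estimate, so $n_\sigma$ must be taken large enough to meet two requirements simultaneously — pushing $\varepsilon_n$ below $\varepsilon$ and pushing the tail bound $R_n$ below $\sigma$ — which is possible exactly because both sequences vanish. I would also remark that $n_\sigma$ is not explicit in this form, but that keeping track of the constants $K_1,K_2,K_3$ in (\ref{dering}) yields an explicit (if crude) value, as illustrated on concrete functions $f$ in the numerical section.
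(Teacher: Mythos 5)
Your proposal is correct and takes essentially the same route as the paper: both read (\ref{ic}) directly off inequality (\ref{dering}) by passing to the complementary event, observing that the right-hand side tends to zero, and choosing $n_{\sigma}$ large enough. If anything you are more explicit than the paper, which (after a somewhat gratuitous appeal to Kronecker's lemma) only makes the tail bound $\leq\sigma$ and leaves implicit the point you handle carefully, namely that the intrinsic radius $\varepsilon_{n}=\sqrt{1+\delta}\,\sqrt{\ln n}/n^{a(1-c)-\rho}$ must also be pushed below the prescribed $\varepsilon$, which holds because $a(1-c)-\rho>0$.
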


\begin{proof}
Indeed, using Kronecker's Lemma, we obtain $\lim\limits_{n\rightarrow
+\infty }\alpha \left( n\right) =0$ which implies
\begin{equation}
\lim_{n\rightarrow +\infty }\frac{K_{1}}{n^{1+\delta }}+\frac{K_{2}}{n^{\rho
\frac{r}{2}}}+\frac{K_{3}}{n^{\rho \frac{\left( \beta +1\right) p}{\beta +p}%
}\left( \ln n\right) ^{\frac{\left( \beta +1\right) p}{\beta +p}}}=0.
\end{equation}%
Since there exists a natural integer $n_{\sigma }$ such that%
\begin{equation}
\forall \ n\in \mathbb{N},n\geq n_{\sigma }-1\Longrightarrow \frac{K_{1}}{%
n^{1+\delta }}+\frac{K_{2}}{n^{\rho \frac{r}{2}}}+\frac{K_{3}}{n^{\rho \frac{%
\left( \beta +1\right) p}{\beta +p}}\left( \ln n\right) ^{\frac{\left( \beta
+1\right) p}{\beta +p}}}\leq \sigma ,  \label{fin}
\end{equation}%
thus, (\ref{ic}) arises from (\ref{dering}) and (\ref{fin}).
\end{proof}

\noindent 4. \textbf{Numerical results}

In this section, a simulation study is proposed to check the validity of our obtained theoretical results. We consider two examples. In the first one, a contractive function where its unique fixed point is known exactly and we compare the fixed point to the approximated ones obtained using the Mann's iterative algorithm. In the second example, we consider a classical problem from astronomy, where the mathematical equation cannot be solved to obtain the exact value of the fixed point and we use the Cauchy's criterium to compare two successive iterates to insure the convergence of the sequence obtained using iterative Mann's algorithm.

\begin{equation*}
x_{n+1}=\left( 1-\frac{a}{n}\right) x_{n}+\frac{a}{n}\left[ f\left(
x_{n}\right) +\frac{1}{n}\xi _{n}\right]
\end{equation*}

$0<a(1-c)<1,$ $\xi _{0}=0,$ $n\in
\mathbb{N}
^{\ast }.$

To characterize the strong mixing random errors $(\xi _{i})$, we consider an
autoregressive model $(\xi_{i})_{i}$ of order 1 (see \cite{dahm}) described as follows
\begin{equation}
\begin{array}{c}
\xi _{i+1}=\varphi \xi _{i}+g_{i},%
\end{array}
\label{regress}
\end{equation}%
where $g_{i}$ is a Gaussian white noise process, $\varphi $ is a constant
such that $\left\vert \varphi \right\vert <1$. For the simulation of Gaussian random variables $(g_{i})_{i}$, we use the method of Box-Muller :%
\begin{equation}
\begin{array}{cc}
g_{k} & =\sqrt{-2ln(u_{1})}\ cos(2\pi u_{2})
\end{array}%
  \label{gauss}
\end{equation}%
where $u_{1}$ and $u_{2}$ are uniform distributed random numbers.
\begin{example}
We consider the following function defined by:
\begin{equation*}
\begin{array}{cc}
f & :\left[ 0,5\right] \rightarrow \left[ 0,5\right]  \\
x & \mapsto \sqrt{x+1}%
\end{array}%
\end{equation*}%
The function $f$ is a contractive function with $c=$ $%
\underset{x\in \left[ 0,+\infty \right) }{max}\left\vert f^{'}
(x)\right\vert =\frac{1}{2}.$ Hence $f$ has a unique fixed point $x^{\ast
}=\frac{1+\sqrt{5}}{2}=1,618033988749895$, which is known as golden number. For $x_{1}=1.3,a=\frac{1}{4}$ and
$\varphi =0.8,$ the following results are obtained:%
\begin{equation*}
\begin{tabular}{|l|l|l|}
\hline
$n$ & $x_{n}$ & $\left\vert x_{n}-x^{\ast }\right\vert $ \\ \hline
$10^{3}$ & 1.614142671526978 & 0.003891317222917 \\ \hline
$10^{4}$ & 1.615420224332314 & 0.002613764417581 \\ \hline
$10^{5}$ & 1.616115916146472 & 0.001918072603423 \\ \hline
\end{tabular}%
\end{equation*}
\end{example}

\begin{example}\label{astronomy_example} Most of mathematical problems come from other  engineering sciences ( physics,
chemistry, geology, astronomy, etc.). When studying some physical problems
using the appropriate mathematical models, we obtain an equation or a set of
equations and usually cannot be solved analytically because in general, these
equations are corrupted by noise or the known mathematical tools do not
allow us to solve them. To illustrate this fact, we consider the following \
classical example from astronomy.

Consider a planet in an orbit around the sun as described by the following
diagram.
\begin{figure}[ht]
  \
  \centering
  \includegraphics[scale=0.4]{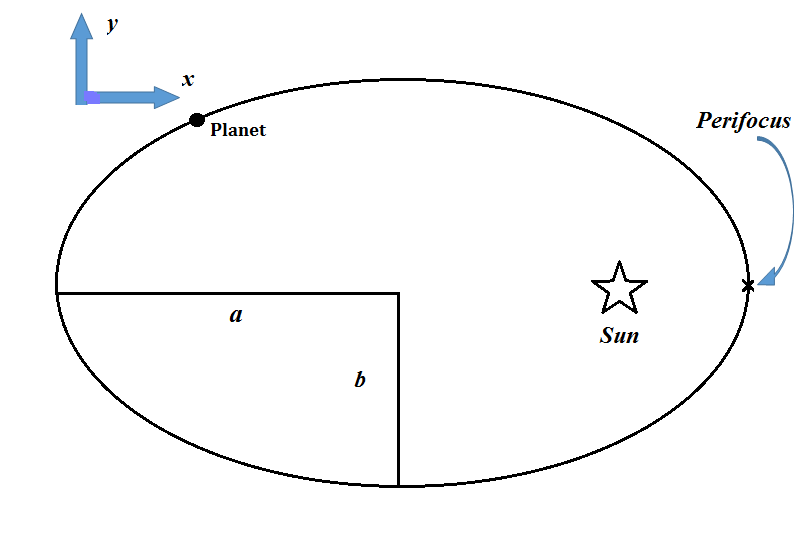}\\
\end{figure}

Let $n$ be the mean angular motion of the Mercury's orbit around the sun, $t$ the elapsed time since the planet
was last closet to the sun (this is called perifocus or perihelion in astronomy) and $e=%
\sqrt{1-\frac{b^{2}}{a^{2}}}$, the eccentricity of the planet's elliptical
orbit.\ Using Kepler's laws of planetary motion, we obtain the location of
the planet at time $t.$%
\[
\left\{
\begin{array}{c}
x=a\left( \cos \left( E\right) -e\right)  \\
y=a\sqrt{1-e^{2}}\sin \left( E\right)%
\end{array}%
\right.
\]%
The quantity $E$ is called the eccentric anomaly and is given by the following
equation%
\[
E=nt+e\sin \left( E\right) =M+e\sin \left( E\right).
\]%
where $M$ is called the mean anomaly which increases linearly in time at the rate $n$. Note that $E$ is the fixed point of the function $f,\ $where $f\left(
x\right) =M+e\sin \left( x\right) $ for a given time $t$ and the frequency of the orbit $\omega$. In this
equation, we cannot find an explicit formula of the eccentric anomaly $E.$
It is easy to check that $f$ is a contraction, moreover, we have
\[
\left\vert f\left( x\right) -f\left( y\right) \right\vert \leq e\left\vert
x-y\right\vert
\]%
which ensures the existence and uniqueness of the fixed point $E.$

For our simulation, by choosing the planet Mercury, we have its eccentricity $e= 0.20563069$ and the mean anomaly M= $3.05076572$ 
(The Mann's process is implemented for $a=0.9,\varphi =0.7),$ and given an initial guess $x_{1}=3$, we obtain the
following iterates :
\begin{equation*}
\begin{tabular}{|l|l|l|l|}
\hline
$n$ & $x_{n}$ & $\left\vert x_{n}-x_{n-1}\right\vert $&$\left\vert x_{n}-x_{fp}\right\vert $ \\ \hline
 100&  3.066277803444744&  5.084582991976561e-06&3.292480386907215e-05 \\ \hline
1000 & 3.066247563732222 & 2.842158499660741e-07&2.685091347043311e-06 \\ \hline
$10^{4}$ &  3.066245125153754 &  6.291136500635730e-08&2.465128789985727e-07 \\ \hline
$10^{5}$ & 3.066244900326525 &  7.696832948766996e-09&2.168565016447133e-08 \\ \hline
\end{tabular}%
\end{equation*}%

\end{example}

\begin{remark}
Note that the numerical solution of the equation $f(x)=x$ given by Matlab is $x_{fp}=3.066244878640875$.
As we can observe, the used Mann algorithm gives nice approximations of the unique fixed point of the function $f$.
Thus, the complementary numerical examples considered above make the obtained theoretical results of convergence well palpable.
\end{remark}

\end{document}